\title{Mod~$p$ Bernstein centres of $p$-adic groups}
\author{Andrea Dotto}
\date{}
\begin{document}
\begin{abstract}
We prove that the centre of the category of smooth mod~$p$ representations with fixed central character of a split semisimple $p$-adic group is a local ring.
\end{abstract}

\maketitle

\section{Introduction.}
Let~$F/\bQ_p$ be a finite extension with ring of integers~$\mO$ and residue field~$k$, and fix a uniformizer~$\pi_F$ of~$\mO$.
Let~$G/\mO$ be a split connected semisimple group, and fix a Borel subgroup $B/\mO$ with unipotent radical~$U$, and a maximal torus~$T/\mO$ contained in~$B$.
Write~$Z$ for the centre of~$G$.
We will sometimes use the same symbol to denote an algebraic group over~$F$ and its group of $F$-points, and we will write~$K = G(\mO)$.
Fix an algebraic closure $\cbF_p$ of~$\bF_p$, an embedding $k \to \cbF_p$, and a smooth character $\zeta: Z(F) \to \cbF_p^\times$.
We prove the following theorem.

\begin{thm}\label{maintheorem}
Assume that the Dynkin diagram of~$G$ is connected.
Let~$\mZ$ be the centre of the category of smooth $\cbF_p[G]$-representations with central character~$\zeta$.
Then
\benum
\item $\mZ$ is a local $\cbF_p$-algebra with residue field~$\cbF_p$, and
\item the maximal ideal of~$\mZ$ acts by locally nilpotent operators on all smooth~$\cbF_p[G]$-representations.
\eenum
\end{thm}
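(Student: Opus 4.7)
The plan is to establish part (ii) along with a ring map $\lambda\colon \mZ\to\cbF_p$, and then deduce (i) formally: if $\mathfrak{m}:=\ker(\lambda)$ acts locally nilpotently on every smooth $\cbF_p[G]$-representation with central character $\zeta$, then $\mathfrak{m}$ is contained in the Jacobson radical of $\mZ$; combined with the surjection $\mZ\twoheadrightarrow\cbF_p$ this forces $\mZ$ to be local with residue field $\cbF_p$, giving (i).

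\emph{Step 1 (the character $\lambda$).} By the admissibility of irreducible smooth $\cbF_p[G]$-representations (Abe--Henniart--Herzig--Vign\'eras), each irreducible $\pi$ satisfies $\mathrm{End}_G(\pi)=\cbF_p$, so Schur's lemma yields a character $\lambda_\pi\colon\mZ\to\cbF_p$. I would prove that $\lambda_\pi$ is independent of the choice of irreducible $\pi$ with central character $\zeta$ in three stages: first, use the AHHV classification to reduce to supersingular $\pi$ of a standard Levi $M$; second, use functoriality of centres along parabolic induction to compare $\lambda_\pi$ with the analogous character for its supersingular Levi datum; and third, for supersingulars of the same group, use Herzig's presentation $\mathcal{H}(M,V)\cong\cbF_p[T_1,\dots,T_r]$ of the spherical Hecke algebra to realise each such representation as a quotient of $\text{c-Ind}_{K_M Z_M}^M V$ by the maximal ideal at the origin, and propagate the constraint $\lambda_\pi=\lambda_{\pi'}$ between different weights via weight-change morphisms that chain through the connected Dynkin diagram.

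\emph{Step 2 (local nilpotence).} For smooth $W\in\mathrm{Rep}_\zeta(G)$ and $v\in W$, the cyclic subrepresentation $\langle G\cdot v\rangle$ is finitely generated, hence a quotient of $\bigoplus_{i=1}^N\text{c-Ind}_{KZ}^G V_i$ for finitely many irreducible $K$-representations $V_i$. On each summand the action of $\mZ$ factors through $\mathcal{H}(G,V_i)=\mathrm{End}_G(\text{c-Ind}_{KZ}^G V_i)$, and by Step~1 the image of $\mathfrak{m}$ lies in the augmentation ideal at the origin of the polynomial algebra. A Nakayama argument on the finite-dimensional $K$-isotypic components of the compact induction then shows that powers of $\mathfrak{m}$ annihilate the image of any given vector, establishing local nilpotence.

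\emph{Main obstacle.} The crux is the independence of $\lambda_\pi$ from $\pi$; this is where the connectedness of the Dynkin diagram must enter essentially, since otherwise $G$ decomposes as a product and $\mZ$ splits as a non-trivial product of rings, contradicting locality. I expect the hardest sub-step to be the comparison of $\lambda_\pi$ across distinct supersingular representations of the same Levi at different weights: this calls either for an explicit chain of weight-change morphisms between compact inductions that exploits the connected Dynkin diagram, or for a more conceptual input such as the centre of the pro-$p$ Iwahori--Hecke algebra and its faithful action on $\text{c-Ind}_{I(1)Z}^G\mathbf{1}$ in the sense of Ollivier--Vign\'eras.
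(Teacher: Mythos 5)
Your outline is a genuinely different route from the paper's, but it has gaps that I don't think can be filled along the lines you indicate.

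First, the anchor of your Step 1 --- Schur's lemma for all irreducible smooth $\cbF_p[G]$-representations --- is not available. Admissibility of an arbitrary irreducible smooth mod $p$ representation of a $p$-adic group is an open problem (it is known for $\GL_2(\bQ_p)$ and a handful of other low-rank cases, but not in the generality of the theorem). The AHHV classification is a classification of irreducible \emph{admissible} representations, so you cannot invoke it to deduce admissibility, and without admissibility you do not get $\End_G(\pi)=\cbF_p$. The paper sidesteps this entirely: it never classifies irreducibles, and instead works with the images $T_\sigma$ of $T\in\mZ$ inside the Hecke algebras $\mH_G(\sigma)=\End_G(\cInd_K^G\sigma)$ of compact inductions of Serre weights. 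The hard content is Theorem~\ref{scalars}, which proves $T_\sigma$ is a \emph{scalar} by a support argument: if $T_\sigma$ were supported off $K$, then via the Satake transform, change of weight, and a careful bookkeeping with $\ad(t)^*$-equivariance, one produces a surjection from a parabolic induction $\Ind_{P(k)}^{G(k)}(Q_\sigma)$ onto each $R_\rho$ with $\rho^{V(k)}\cong\sigma^{V(k)}$, forcing $\Ind_{P(k)}^{G(k)}(Q_\sigma)$ to be projective --- impossible since $Q_\sigma$ restricted to $V(k)$ is a non-projective trivial module for a $p$-group. Nothing like this appears in your outline; your "chain through the connected Dynkin diagram" at distinct weights is gesturing at the right obstacle but with no mechanism. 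The paper's mechanism for comparing scalars across weights is not the Dynkin diagram at all: it is the structure of blocks of $\cbF_p[G(k)]$ (Humphreys' theorem on defect groups, giving Lemma~\ref{connectextensions}), combined with Lemma~\ref{nonsplit} that compact induction preserves non-splitness, and a change-of-weight step to handle Steinberg twists.

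Second, your Step 2 does not go through as written. Knowing that $\mathfrak{m}$ maps into the augmentation ideal of $\mH_G(\sigma)\cong\cbF_p[T_1,\dots,T_r]$ does not imply local nilpotence of $\mathfrak{m}$ on $\cInd_{KZ}^G V_i$ by Nakayama: the compact induction is not a finitely generated $\mH_G(\sigma)$-module, and its $K$-isotypic components are not finite-dimensional (Mackey gives an infinite direct sum indexed by $X_*(T)_-$, and a fixed $K$-isotype recurs in infinitely many summands). What the paper actually does is stronger and simpler: once $T_\sigma=\lambda$ is a scalar independent of $\sigma$, one shows $T-\lambda$ kills each $\cInd_K^G(\sigma)$ outright, hence is nilpotent on $\cInd_K^G(\tau)$ for any finite-dimensional $\tau$ by dévissage, and then local nilpotence on an arbitrary smooth $\pi$ follows because every vector lies in the image of some $\cInd_K^G(\tau)\to\pi$. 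The step "zero on all $\cInd_K^G(\sigma)$" --- not merely "in the augmentation ideal" --- is what makes the dévissage work, and it is precisely the content of Theorem~\ref{scalars} together with the independence argument. So the scalar statement is not a convenience; it is the load-bearing wall, and your sketch leaves it standing on the open admissibility question.
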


Here we say an endomorphism $N: \pi \to \pi$ of an $\cbF_p[G]$-representation~$\pi$ is locally nilpotent if for all~$v \in \pi$ there exists a positive integer~$n$ such that $N^n v = 0$, with~$n$ possibly depending on~$v$.

\subsection{The work of Ardakov and Schneider.}
Theorem~\ref{maintheorem} was proved in~2020 in the context of forthcoming work with Emerton and Gee on localization theory for $p$-adic representations of~$\GL_2(\bQ_p)$, and was intended to appear together with that work. 
Since then, Ardakov and Schneider have used completely different techniques to prove a stronger result, namely~\cite[Theorem~6.14]{AScentres}, which implies that the maximal ideal of the local ring~$\mZ$ is in fact zero (at least when the group is adjoint). 
We have decided to write this note in the hope that our methods may still be of independent interest.

\subsection{Remarks on our assumptions.}
Since stronger results are available in~\cite{AScentres}, and in order to streamline the main argument and gain direct access to results in the literature, we have added a few simplifying assumptions to the statement of the theorem (namely that the group is split semisimple and the Dynkin diagram is connected). Because of these assumptions our theorem does not apply directly to the group~$\GL_2(\bQ_p)$. To remedy this, we now give a proof of theorem~\ref{maintheorem} for~$G = \GL_2(\bQ_p)$: the argument in the general case proceeds along similar lines, although there are some complications arising from the larger size of the Hecke algebras.

\subsection{The case of~$\GL_2(\bQ_p)$.}
Let~$F = \bQ_p, k = \bF_p$ and~$G = \GL_2$.
Let~$T \in \mZ$ and let~$\sigma$ be an irreducible~$\cbF_p[K]$-representation of central character~$\zeta |_K$.
Let~$Z$ act on~$\sigma$ by the character~$\zeta$, and write
\[
\mH_G(\sigma) = \End_G(\cInd_{KZ}^G \sigma)
\] 
for the Hecke algebra of~$\sigma$.
The central element~$T$ acts on~$\cInd_K^G(\sigma)$ by an element of~$\mH_G(\sigma)$ that we denote~$T_\sigma$.

The main point is to prove that~$T_\sigma$ is a scalar~$\lambda_\sigma \in \cbF_p$, and that~$\lambda_\sigma$ is independent of~$\sigma$.
Once this is done, one deduces theorem~\ref{maintheorem} by the same argument as in the general case, which is in section~\ref{endofproof}.
Assuming that~$T_\sigma$ is scalar, one deduces that~$\lambda_{\sigma_1} = \lambda_{\sigma_2}$ for nonprojective Serre weights~$\sigma_1$ and~$\sigma_2$ by reducing to the case that~$\Ext^1_{KZ}(\sigma_1, \sigma_2) \ne 0$ and using lemma~\ref{nonsplit}. 
The reduction step uses the structure of the blocks of~$\cbF_p[\GL_2(\bF_p)]$ to connect any two nonprojective representations~$\sigma_1, \sigma_2$ with the same central character by a chain of extensions.
(A more general and precise version of this statement, which follows from a result of Humphreys, is given in lemma~\ref{connectextensions}. It is related to the fact that the moduli space of the Emerton--Gee stack for~$\GL_2(\bQ_p)$ is connected.)

Now we fix a nonprojective Serre weight $\sigma$ and we prove that~$T_\sigma$ is a scalar.
For this it suffices to prove that~$\supp(T_\sigma) \subset KZ$, because the Hecke operators supported on~$KZ$ are of the form $\cInd_{KZ}^G(\alpha)$ for some $KZ$-linear map $\alpha: \sigma \to \sigma$, and these are all scalar.
To do so, we take a projective envelope $R_\sigma \to \sigma$ in the category of~$\cbF_p[\GL_2(\bF_p)]$-representations. 
Since the centre of~$\GL_2(\bF_p)$ has order coprime to~$p$, the module $R_\sigma$ has the same central character as~$\sigma$, and we can study the action of~$T$ on the exact sequence
\[
0 \to \cInd_{KZ}^G(\rad\, R_\sigma) \to \cInd_{KZ}^G(R_\sigma) \to \cInd_{KZ}^G(\sigma) \to 0.
\]
Write~$T_\sigma^+$ for the image of~$T$ in~$\mH_G(R_\sigma)$.
If~$T_\sigma$ is not supported in~$KZ$, then neither is~$T_{\sigma}^+$.

Now we use the fact that elements of~$\mH_G(R_\sigma)$ supported in double cosets other than~$KZ$ are in bijection with $T(k)$-linear maps
\[
(R_{\sigma})_{\lbar U(k)} \to R_{\sigma}^{U(k)}
\]
where~$U$, resp. $\lbar U$ is the upper-triangular, resp. lower-triangular, unipotent subgroup of~$G$.
Furthermore, the map corresponding to~$T_\sigma^+$ (and still denoted~$T_\sigma^+$) has the additional property that
\[
\begin{tikzcd}
(R_\sigma)_{\lbar U(k)} \arrow[r, "T_\sigma^+"] \arrow[d] & R_\sigma^{U(k)} \arrow[d]\\
\sigma_{\lbar U(k)} \arrow[r, "T_\sigma"] & \sigma^{U(k)}
\end{tikzcd}
\] 
commutes.
Hence $R_\sigma \to \sigma$ yields a surjection
\[
R_\sigma^{U(k)} \to \sigma^{U(k)}.
\]
By Frobenius reciprocity, we deduce that there is a surjection $\Ind_B^G(\sigma^{U(k)}) \to R_\sigma$, hence~$R_\sigma$ is a direct summand of the finite parabolic induction $\Ind_B^G(\sigma^{U(k)})$.
This never happens if~$\sigma$ is not projective, and we deduce that~$T_\sigma$ is supported in~$KZ$.

To conclude, we need to treat the case when~$\sigma$ is a projective irreducible~$\cbF_p[\GL_2(\bF_p)]$-representation, or equivalently a twist of the Steinberg representation of~$\cbF_p[\GL_2(\bF_p)]$. 
We need to prove that if~$\chi: \GL_2(\bF_p) \to \cbF_p^\times$ is a character then~$T$ acts by a scalar~$\lambda_{\chi\otimes\St}$ on~$\cInd_{KZ}^G(\chi\otimes\St)$, and that~$\lambda_{\chi\otimes\St} = \lambda_\chi$.
This is immediate from the existence of an injective $G$-linear map
\[
\cInd_{KZ}^G(\chi\otimes \St) \to \cInd_{KZ}^G(\chi).
\]

\subsection{Acknowledgments.}
We thank Matthew Emerton and Toby Gee for helpful conversations on these and related matters. The author was supported by the James D. Wolfensohn Fund at the Institute for Advanced Study during the writing of this note.

\section{Preliminaries.}

\subsection{Hecke algebras.}
In this paragraph we let~$G$ be a locally profinite group and~$H$ an open subgroup of~$G$.
We fix a finite-dimensional, irreducible and smooth $\lbar \bF_p[H]$-representation~$\sigma$ and a generator~$v$ of~$\sigma$ over~$\cbF_p[H]$.

The compact induction $\cInd_H^G(\sigma)$ is a smooth cyclic $\cbF_p[G]$-module, generated by the function~$[1, v]$ supported in~$H$ and sending the identity to~$v$.
The Hecke algebra $\End_G(\cInd_{H}^G(\sigma))$ is isomorphic to the algebra $\mH_G(\sigma)$ of compactly supported functions
\[
\varphi: G \to \End_{\cbF_p}(V_\sigma) \text{ satisfying } \varphi(h_1 t h_2) = h_1 \circ \varphi(t) \circ h_2
\] 
under convolution.
If~$\varphi$ is such a function then we define the support of~$\varphi$ as the set of double cosets~$HtH$ such that~$\varphi|_{HtH} \ne 0$.
This set will also be referred to as the support of the associated endomorphism~$T_\varphi \in \End_G(\cInd_{H}^G(\sigma))$.

%

We will also need the following description of the Hecke algebra.
For any system~$\mH$ of representatives in~$G$ of~$H \backslash G / H$ and any $t \in \mH$, define~$\sigma(t)$ as the space of functions in~$\cInd_H^G(\sigma)$ supported on~$HtH$.
It is an $H$-representation, and by Frobenius reciprocity and the fact that~$\sigma$ is finitely generated, $\mH_G(\sigma)$ is also $\cbF_p$-linearly isomorphic to
\begin{equation}\label{Heckesum}
\bigoplus_{t \in \mH}\Hom_H \left ( \sigma, \sigma(t) \right )
\end{equation}
We can reformulate this by defining a functor $\ad(t)^*: \Rep_{\cbF_p}(H \cap tHt^{-1}) \to \Rep_{\cbF_p}(H \cap t^{-1}Ht)$ as follows: it sends an object~$(\tau, V)$ to the representation of~$H \cap t^{-1} H t$ on the same vector space~$V$ but the action specified by~$x \mapsto \tau(\ad(t)x) = \tau(txt^{-1})$.
It is the identity on morphisms.
Then there is an $H$-linear isomorphism
\[
\sigma(t) \to \Ind_{H \cap t^{-1}H t}^H \left ( \ad(t)^* \sigma \right )
\]
sending a function~$f$ to the function~$h \mapsto f(th)$.

\subsection{Cartan decomposition.}
Write~$\Phi = \Phi(G, T)$ for the root system of~$T$ acting on~$\fg = \Lie(G)$. 
We work with the positive roots~$\Phi^+$ determined by~$B$.
Let~$\Delta$ be the corresponding system of simple roots.
The monoid of antidominant coweights is
\[
X_*(T)_- = \{\lambda \in X_*(T) : (\lambda, \alpha) \leq 0 \text{ for all } \alpha \in \Phi^+\}.
\]
Evaluating at the uniformizer~$\pi_F$ we obtain a bijection of~$X_*(T)_-$ onto $T^- / T(\mO)$, where
\[
T^- = \{t \in T(F) : v_F(\alpha(t)) \leq 0 \text{ for all } \alpha \in \Phi^+\}.
\]
The refined Cartan decomposition is
\[
G = \coprod_{\lambda \in X_*(T)_-} K \lambda(\pi_F) K.
\]

\subsection{Parabolic subgroups.}
If~$\lambda \in X_*(T)$ is a cocharacter, we write~$P_\lambda$ for the associated parabolic subgroup of~$G$ and~$U_\lambda$, resp. $L_\lambda$ for its unipotent radical, resp. Levi factor.
If~$\lambda \in X_*(T)_-$ and~$t = \lambda(\pi_F)$ we can identify the image of $K \cap t^{-1}Kt$ in~$G(k)$ with the group of $k$-points of~$P_\lambda$.
Namely, if~$\red: G(\mO) \to G(k)$ is the reduction map, then
\[
\red(K \cap t^{-1}K t) = P_\lambda(k).
\]
This is proved in~\cite[Proposition~3.8]{HerzigSatake}.

\subsection{Serre weights.}
As usual, by a Serre weight of~$G$ we mean an irreducible representations of $\cbF_p[G(k)]$.
By~\cite[Lemma~2.3]{Herzigreps}, and references therein, if~$\sigma$ is a Serre weight and~$U_\lambda$ is the unipotent radical of a parabolic subgroup of~$G$ then~$\sigma^{U_\lambda(k)}$ is an irreducible $L_\lambda(k)$-representation.
Whenever~$\sigma$ is a Serre weight for~$G$, we will write~$R_\sigma \to \sigma$ for a projective envelope in the category of~$\cbF_p[G(k)]$-representations.

\subsection{Satake isomorphism.}
Let~$\sigma$ be a Serre weight for~$G$.
The centre~$Z$ of~$G$ is a finite flat group scheme of multiplicative type over~$\mO$, and so~$Z(F) = Z(\mO) \subset K$ and the $p$-coprime part of $Z(\mO)$ identifies under reduction mod~$p$ with~$Z(k)$. 
It follows that the $\cbF_p^\times$-valued characters of~$Z(F)$ and~$Z(k)$ are identified.
We will assume that the central character of~$\sigma$ equals our fixed character $\zeta: Z(F) \to \cbF_p^\times$.
The Hecke algebra~$\End_{K}^G(\cInd_K^G(\sigma))$ will be denoted~$\mH_G(\sigma)$.

The Satake transform
\[
\mS_G : \mH_G(\sigma) \to \mH_T(\sigma^{U(k)})
\]
is an injective~$\cbF_p$-algebra homomorphism defined in~\cite{HerzigSatake}.
Its image consists of the functions that are supported on~$T^-$.
We will need some information about its effect on the support of functions.
For this, we use that~$K \backslash G / K$ is in bijection with~$X_*(T)_-$ (via the Cartan decomposition) and~$T(\mO) \backslash T / T(\mO)$ is in bijection with~$X_*(T)$.
The group~$X_*(T)$ has a partial order~$\geq_\bR$, where~$\lambda \geq_\bR \mu$ if~$\lambda - \mu$ is a nonnegative real linear combination of the positive coroots of~$(G, B, T)$.
For each~$\lambda \in X_*(T)_-$ the space of elements of~$\mH_G(\sigma)$ supported in~$K \lambda(\pi_F) K$ is one-dimensional, and a certain generator~$T_\lambda$ is singled out in~\cite[Section~1.4]{HerzigSatake}.
Similarly, if~$\mu \in X_*(T)$ we write~$\tau_\mu$ for the generator of the space of elements of~$\mH_T(\sigma^{U})$ supported in~$T(\mO) \mu(\pi_F) T(\mO)$ such that~$\tau_\mu(\mu(\pi_F)) = 1$.

\begin{pp}\label{supports}
Let~$X \in \mH_G(\sigma)$.
Then the minimal nonzero elements of the supports of~$X$ and~$\mS_G(X)$ coincide.
\end{pp}
\begin{proof}
This follows from the discussion after~\cite[Proposition~1.4]{HerzigSatake}, where it is asserted that we have
\[
\tau_\lambda = \sum_{\substack{\mu \in X_*(T)_- \\  \mu \geq_\bR \lambda}} d_\lambda(\mu) \mS_G(T_\mu)
\]
for certain scalars~$d_\lambda(\mu) \in \cbF_p$ with~$d_\lambda(\lambda) = 1$.
Writing~$\mS_G(X)$ in the basis given by the~$\tau_\mu$ we obtain an expression $\mS_G(X) = \sum_{\lambda \in X_*(T)_-} \beta_\lambda \tau_\lambda$, and this implies the claim upon applying~$\mS_G^{-1}$ to both sides.
\end{proof}

\subsection{Change of weight.}
Assume that~$\sigma_1, \sigma_2$ are Serre weights for~$G$ and that~$\sigma_1^{U(k)} \cong \sigma_2^{U(k)}$. 
The space
\[
\Hom_{T(k)}(\sigma_1^{U(k)}, \sigma_2^{U(k)})
\]
is one-dimensional over~$\cbF_p$, and every nonzero element induces the same isomorphism
\[
\iota: \mH_T(\sigma_1^{U(k)}) \isom \mH_T(\sigma_2^{U(k)}).
\]
This map~$\iota$ preserves the support of functions.
The Satake transform then yields an isomorphism
\[
\mH_G(\sigma_1) \isom \mH_G(\sigma_2).
\]
that we still denote~$\iota$.
By proposition~\ref{supports}, it preserves the minimal nonzero elements of the support of functions.
By~\cite[Proposition~6.2]{Herzigreps}, the space
\[
\Hom_G \left ( \cInd_{K}^G(\sigma_1), \cInd_{K}^G(\sigma_2) \right )
\]
is not zero and all of its elements are equivariant for~$\iota$.
Furthermore, by the proof~\cite[Corollary~6.5]{Herzigreps} we know that the nonzero elements of this space are all injective.

\section{Proof of the main theorem.}
We are going to prove theorem~\ref{maintheorem} by implementing a similar strategy as for~$\GL_2(\bQ_p)$.
Let~$T \in \mZ$, let~$\sigma$ be a Serre weight for~$G$ with central character~$\zeta$, and write~$T_\sigma$ for the image of~$T$ in $\mH_G(\sigma) = \End_H(\cInd_K^G(\sigma))$.
We begin by proving that~$T_\sigma = \lambda_\sigma$ for some~$\lambda_\sigma \in \cbF_p$.
This is theorem~\ref{scalars}, which requires some preliminary steps.
Recall that $R_\sigma \to \sigma$ is a projective envelope of~$\sigma$ in the category of~$\cbF_p[G(k)]$-representations, and that it has the same central character as~$\sigma$ (since the centre of~$G(k)$ has order coprime to~$p$). 

\begin{pp}\label{exactsequence}
Write
\[
T_\sigma = \sum_{\lambda \in X_*(T)_-} a_\lambda T_\lambda.
\]
Assume that~$a_\mu \ne 0$ for some~$\mu \ne 0$, or equivalently that~$K\mu(\pi_F)K \subset \supp T_\sigma$.
Then there is an exact sequence
\[
0 \to (\rad R_\sigma)^{U_{-\mu}(k)} \to R_\sigma^{U_{-\mu}(k)} \to \sigma^{U_{-\mu}(k)} \to 0.
\]
\end{pp}
\begin{proof}
Since~$\sigma^{U_{-\mu}(k)}$ is an irreducible representation of~$L_{-\mu}(k)$, it suffices to prove that the map $R_\sigma^{U_{-\mu}(k)} \to \sigma$ is not zero.
Consider the image~$T_\sigma^+$ of~$T$ in~$\mH_G(R_\sigma)$.
Since~$T$ is central, there is a commutative diagram
\[
\begin{tikzcd}
\cInd_{K}^G(R_\sigma) \arrow[r, "T_\sigma^+"] \arrow[d] & \cInd_{K}^G(R_\sigma) \arrow[d, "\cInd_{K}^G(\pr)"]\\
\cInd_{K}^G(\sigma) \arrow[r, "T_\sigma"] & \cInd_{K}^G(\sigma)
\end{tikzcd}
\]
inducing another
\[
\begin{tikzcd}
R_\sigma \arrow[r, "T_\sigma^+"] \arrow[d] & \bigoplus_{\lambda \in X_*(T)_-} R_\sigma(\lambda) \arrow[d]\\
\sigma \arrow[r, "T_\sigma"] & \bigoplus_{\lambda \in X_*(T)_-} \sigma(\lambda)
\end{tikzcd}
\]
by Frobenius reciprocity.
Since~$a_\mu \ne 0$, we know that $T_\sigma(v_\sigma)$ has nonzero projection to the summand~$\sigma(\mu)$. 
This implies that~$T_\sigma^+(v_\sigma^+)$ has nonzero projection to~$R_\sigma(\mu)$.
Hence the composition of $K$-linear maps
\[
R_\sigma \xrightarrow{T^+_\sigma} R_\sigma(\mu) \to \sigma(\mu)
\]
is nonzero. 

Recall that~$R_\sigma(\mu)$ is isomorphic to $\Ind_{K \cap t^{-1}Kt}^{K} \ad(t)^*R_\sigma$, where $t = \mu(\pi_F)$ and $\ad(t)^* R_\sigma$ is the representation of~$t^{-1}K t$ on the same space as~$R_\sigma$ defined by $x \mapsto R_\sigma(txt^{-1})$.
Again by Frobenius reciprocity, we obtain a map
\[
R_\sigma \xrightarrow{T^+_\sigma} R_\sigma
\]
which is equivariant for $\ad(t): K \cap t^{-1}K t \to K \cap t K t^{-1}$, and whose composition with the projection $R_\sigma \to \sigma$ is nonzero.

Then it suffices to prove that the image of~$T^+_\sigma: R_\sigma \to R_\sigma$ is contained in~$R_{\sigma}^{U_{-\mu}(k)}$.
Let~$x \in R_\sigma$ and~$g \in U_{-\mu}(k)$. 
Since the projection of~$g$ to the Levi quotient of~$P_{-\mu}(k)$ is the identity, \cite[Proposition~3.8]{HerzigSatake} implies that we can find~$\tld g \in K \cap t^{-1}K t$ such that
\[
\red(\tld g, t\tld gt^{-1}) = (\id, g).
\]
By the equivariance properties of~$T^+_\sigma$, we deduce
\[
T^+_\sigma(x) = T^+_\sigma(\tld g \cdot x) = t \tld g t^{-1}T^+_\sigma(x) =  g T^+_\sigma(x)
\]
and the claim follows.
\end{proof}

\begin{pp}\label{samesupports}
Let~$\sigma_1, \sigma_2$ be Serre weights for~$G$ with~$\sigma_1^{U(k)} \cong \sigma_2^{U(k)}$.
Let~$\lambda \in X_*(T)_-$ be a minimal nonzero element of the support of~$T_{\sigma_2}$ under the partial order~$\geq_\bR$.
Then~$K\lambda(\pi_F)K \subset \supp(T_{\sigma_1})$.
\end{pp}
\begin{proof}
By our discussion of change of weight there exists an injective $\cbF_p[G]$-linear map
\[
\tau: \cInd_K^G(\sigma_1) \to \cInd_K^G(\sigma_2)
\]
that is furthermore equivariant for the isomorphism $\iota: \mH_G(\sigma_1) \isom \mH_G(\sigma_2)$ induced by any $\cbF_p[T(k)]$-linear isomorphism $\sigma_1^{U(k)} \isom \sigma_2^{U(k)}$.
In addition, the minimal nonzero elements of the supports of~$T_{\sigma_2}$ and~$\iota^{-1}(T_{\sigma_2})$ coincide.
Since~$\tau$ is $\cbF_p[G]$-linear and~$T \in \mZ$, we have
\[
\tau \circ T_{\sigma_1} = T_{\sigma_2} \circ \tau,
\]
and since~$\tau$ is $\iota$-equivariant, we have
\[
\tau \circ \iota^{-1}T_{\sigma_2} = T_{\sigma_2}\circ \tau.
\]
Since~$\tau$ is injective, this implies~$T_{\sigma_1} = \iota^{-1}(T_{\sigma_2})$, and the proposition follows.
\end{proof}

\begin{corollary}\label{exactsequences}
Assume~$K \lambda(\pi_F) K$ is a minimal nonzero element of~$\supp(T_\sigma)$.
If~$\rho$ is a Serre weight with~$\sigma^{U(k)} \cong \rho^{U(k)}$, then there is an exact sequence
\[
0 \to (\rad R_\rho)^{U_{-\lambda}(k)} \to R_\rho^{U_{-\lambda}(k)} \to \rho^{U_{-\lambda}(k)} \to 0.
\]
\end{corollary}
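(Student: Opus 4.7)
The plan is to combine the two preceding propositions directly. Proposition~\ref{samesupports} is stated so as to transfer minimal nonzero support elements of~$T_{\sigma_2}$ into the support of~$T_{\sigma_1}$, provided that $\sigma_1^{U(k)} \cong \sigma_2^{U(k)}$. I would apply it with $\sigma_2 = \sigma$ and $\sigma_1 = \rho$: the hypothesis $\sigma^{U(k)} \cong \rho^{U(k)}$ is exactly what is assumed, and the chosen $\lambda$ is by assumption a minimal nonzero element of~$\supp(T_\sigma)$, so the proposition gives
\[
K\lambda(\pi_F) K \subset \supp(T_\rho).
\]

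Now I would feed this into Proposition~\ref{exactsequence}, applied to the Serre weight~$\rho$ with $\mu = \lambda$. Writing $T_\rho = \sum_{\nu \in X_*(T)_-} a_\nu T_\nu$, the containment just obtained means $a_\lambda \ne 0$, and~$\lambda$ is nonzero (being a minimal \emph{nonzero} element of a support). These are exactly the hypotheses of Proposition~\ref{exactsequence}, which then produces the desired exact sequence
\[
0 \to (\rad R_\rho)^{U_{-\lambda}(k)} \to R_\rho^{U_{-\lambda}(k)} \to \rho^{U_{-\lambda}(k)} \to 0.
\]

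There is essentially no obstacle here beyond verifying that the hypotheses line up correctly; the only thing to keep track of is that the role of~$\mu$ in Proposition~\ref{exactsequence} is played by~$\lambda$, and that the nonvanishing of~$a_\lambda$ (for~$\rho$ in place of~$\sigma$) is precisely what Proposition~\ref{samesupports} delivers. No further input about the structure of~$R_\rho$ or about Hecke algebras is needed, since all that work is absorbed into the two earlier propositions.
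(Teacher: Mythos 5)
Your proof is correct and matches the paper's approach exactly; the paper simply states the corollary is "immediate from propositions~\ref{exactsequence} and~\ref{samesupports}," and you have spelled out precisely the substitution ($\sigma_1 = \rho$, $\sigma_2 = \sigma$, $\mu = \lambda$) that makes it so.
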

\begin{proof}
Immediate from propositions~\ref{exactsequence} and~\ref{samesupports}.
\end{proof}

\begin{thm}\label{scalars}
There exists~$\lambda_\sigma \in \cbF_p$ such that~$T_\sigma = \lambda_\sigma \in \mH_G(\sigma)$.
\end{thm}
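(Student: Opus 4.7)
The plan is to adapt the strategy sketched for $\GL_2(\bQ_p)$ in the introduction. We argue by contradiction: writing $T_\sigma = \sum_{\lambda \in X_*(T)_-} a_\lambda T_\lambda$, suppose that some $a_\mu \ne 0$ with $\mu \ne 0$, and choose $\mu$ so that $K\mu(\pi_F)K$ is a minimal nonzero element of $\supp T_\sigma$ for the partial order $\geq_\bR$.

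By Corollary~\ref{exactsequences}, for every Serre weight $\rho$ with $\rho^{U(k)} \cong \sigma^{U(k)}$ we obtain a surjection of $L_{-\mu}(k)$-representations
\[
R_\rho^{U_{-\mu}(k)} \twoheadrightarrow \rho^{U_{-\mu}(k)}.
\]
The next step is to convert this, via Frobenius reciprocity, into a surjection
\[
\Ind_{P_{-\mu}(k)}^{G(k)}\bigl(\rho^{U_{-\mu}(k)}\bigr) \twoheadrightarrow R_\rho,
\]
which, by projectivity of $R_\rho$, would exhibit $R_\rho$ as a direct summand of a finite parabolic induction from the proper parabolic $P_{-\mu}(k)$ (proper since $\mu \ne 0$ and the Dynkin diagram of $G$ is connected). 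From there I would contradict the structure of the direct summands of such a parabolic induction of an irreducible $L_{-\mu}(k)$-representation by choosing $\rho$ to be non-projective — possible by varying over the (nontrivial) set of Serre weights with $\rho^{U(k)} \cong \sigma^{U(k)}$, as in the $\GL_2(\bQ_p)$ discussion — since the projective cover of a non-projective Serre weight cannot occur as such a summand.

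The principal obstacle is the Frobenius reciprocity step. In the $\GL_2(\bQ_p)$ argument the relevant Levi is a torus $T$, so $T(k)$-representations are semisimple (as $|T(k)|$ is coprime to $p$) and the surjection $R_\sigma^{U(k)} \twoheadrightarrow \sigma^{U(k)}$ splits automatically, immediately producing the input to Frobenius reciprocity. In the present general setting $L_{-\mu}(k)$-representations need not be semisimple and this splitting is not automatic. To get around this I would use the self-injectivity of $\cbF_p[G(k)]$ — which makes $R_\rho$ injective as well as projective — together with the standard duality between $U_{\mu}(k)$-coinvariants and $U_{-\mu}(k)$-invariants for Serre weights and their projective covers, in order to promote the surjection on $U_{-\mu}(k)$-invariants to an embedding of $R_\rho$ into the parabolic induction, and then take a direct summand by injectivity rather than by a split of the original surjection.
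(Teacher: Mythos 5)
Your overall strategy matches the paper's up to the point of Corollary~\ref{exactsequences}: argue by contradiction from a minimal nonzero $\mu$ in $\supp T_\sigma$, set $P = P_{-\mu}$, $V = U_{-\mu}$, $L = L_{-\mu}$, and use the surjections $R_\rho^{V(k)} \twoheadrightarrow \rho^{V(k)}$ to force a structural impossibility. You also correctly identify the technical obstacle: in the $\GL_2(\bQ_p)$ sketch the Levi is a torus, so the surjection on $V(k)$-invariants splits automatically, but for a general Levi this splitting is not available.

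However, your proposed repair does not close the gap, and the paper's actual fix is genuinely different. You want to convert the surjection $R_\rho^{V(k)} \twoheadrightarrow \rho^{V(k)}$ into either a surjection $\Ind_{P(k)}^{G(k)}(\rho^{V(k)}) \twoheadrightarrow R_\rho$ or (via self-injectivity and the invariants/coinvariants duality) an injection $R_\rho \hookrightarrow \Ind_{P(k)}^{G(k)}(\rho^{V(k)})$. The second runs into the following problem. Since $R_\rho$ is projective over $\cbF_p[G(k)]$, it is free over $\cbF_p[V(k)]$, and the trace map $(R_\rho)_{V(k)} \to R_\rho^{V(k)}$ is indeed an isomorphism; so by Frobenius reciprocity the surjection on invariants does yield a $G(k)$-linear map $\phi : R_\rho \to \Ind_{P(k)}^{G(k)}(\rho^{V(k)})$. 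But $\phi$ is not injective: since $\rho$ is non-projective, the socle $\rho \subset R_\rho$ lies in $\rad R_\rho$, so the composite $\rho \hookrightarrow R_\rho \twoheadrightarrow \rho$ is zero, hence so is the composite $\rho \hookrightarrow R_\rho \to (R_\rho)_{V(k)} \to \rho_{V(k)}$, and therefore $\phi$ vanishes on the simple socle of $R_\rho$. So self-injectivity gives you nothing to split. The first (direct) approach requires exactly the $L(k)$-linear splitting you identified as missing, and in fact no map in $\Hom_{L(k)}(\rho^{V(k)}, R_\rho^{V(k)})$ need exist with nonzero composite to $\rho^{V(k)}$.

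The paper sidesteps this entirely by replacing $\rho^{V(k)}$ with $Q_\sigma$, the projective envelope of $\sigma^{V(k)}$ in $\cbF_p[L(k)]$-modules. Projectivity of $Q_\sigma$ lets one \emph{lift} $Q_\sigma \twoheadrightarrow \sigma^{V(k)} \cong \rho^{V(k)}$ along the surjection $R_\rho^{V(k)} \twoheadrightarrow \rho^{V(k)}$, with no splitting needed; one then has a $P(k)$-linear map $Q_\sigma \to R_\rho^{V(k)} \subset R_\rho$ whose composite with $R_\rho \twoheadrightarrow \rho$ is nonzero, so by Frobenius reciprocity $\Ind_{P(k)}^{G(k)}(Q_\sigma) \twoheadrightarrow R_\rho$. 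Running this over all $\rho$ with $\rho^{V(k)} \cong \sigma^{V(k)}$ and comparing cosocles, one finds $\Ind_{P(k)}^{G(k)}(Q_\sigma) \cong \bigoplus_\rho R_\rho$ is projective over $\cbF_p[G(k)]$. Finally, the contradiction is not a dimension count but a restriction argument: $\Res^{G(k)}_{P(k)}$ preserves projectives, and $Q_\sigma$ is a direct summand of $\Res^{G(k)}_{P(k)} \Ind_{P(k)}^{G(k)}(Q_\sigma)$, so $Q_\sigma$ would be $\cbF_p[P(k)]$-projective; but $V(k)$ acts trivially on $Q_\sigma$ and is a nontrivial $p$-group, which rules this out. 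So the two points to absorb are: (1) pass to the $L(k)$-projective envelope rather than trying to split, and (2) conclude via restriction to $V(k)$ rather than by counting dimensions.
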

\begin{proof}
Since the Hecke operators with support equal to~$K$ are precisely those of the form~$\cInd_K^G(\alpha)$ for a $K$-linear map $\alpha: \sigma \to \sigma$, it suffices to prove that~$\supp(T_\sigma) = K$.
Assume for a contradiction that~$K \lambda(\pi_F) K$ is a minimal nonzero element of $\supp(T_\sigma)$.
Let~$P  = P_{-\lambda}$ and write~$L = L_{-\lambda}$ for its Levi quotient and~$V = U_{-\lambda}$ for its unipotent radical.
Let~$Q_\sigma$ be a projective envelope of~$\sigma^{V(k)}$ in the category of~$\cbF_p[L(k)]$-representations.
We are going to deduce that $Q_\sigma^+ = \Ind_{P(k)}^{G(k)}(Q_\sigma)$ is a projective~$\cbF_p[G(k)]$-module.
To see that this is impossible, we use that~$\Res^G_H$ preserves projectives whenever~$H \subset G$ are finite groups, since it is left adjoint to~$\Ind_H^G$ which is exact.
Since~$Q_\sigma$ is a direct summand of $\Res^{G(k)}_{P(k)}\Ind_{P(k)}^{G(k)}(Q_\sigma)$, it would be a projective~$\cbF_p[P(k)]$-module.
But this is not true, since the restriction of~$Q_\sigma$ to~$V(k)$ is a trivial representation and~$V(k)$ is a $p$-group.

To prove that~$Q_\sigma^+$ is projective, we first compute its cosocle.
For any irreducible $\cbF_p[G(k)]$-representation~$\rho$ we have
\[
\Hom_{G(k)}(Q_\sigma^+, \rho) \cong \Hom_{P(k)}(Q_\sigma, \rho) \cong \Hom_{L(k)}(Q_\sigma, \rho^{V(k)})
\]
which is one-dimensional if~$\sigma^{V(k)} \cong \rho^{V(k)}$, and zero otherwise.

Assume~$\rho^{V(k)} \cong \sigma^{V(k)}$.
Since~$\lambda$ is antidominant, the parabolic~$P_{-\lambda}$ contains the Borel subgroup~$B$ and so $V(k) \subset U(k)$, which implies that $\sigma^{U(k)} \cong \rho^{U(k)}$.
By corollary~\ref{exactsequences}, we have exact sequences
\[
0 \to (\rad R_\rho)^{V(k)} \to R_\rho^{V(k)} \to \rho^{V(k)} \to 0
\]
and so we have a $P(k)$-linear map
\[
Q_\sigma \to R_\rho^{V(k)} \subset R_\rho
\]
whose composition with~$R_\rho \to \rho$ is not zero.
By Frobenius reciprocity, we obtain a $G(k)$-linear surjection
\[
Q_\sigma^+ \to R_\rho.
\]
Since~$R_\sigma$ is projective, this implies that we can write~$Q_\sigma^+ = M_\rho \oplus R_\rho$ for some $\cbF_p[G(k)]$-module~$M_\rho$.

By the same argument, if~$\rho'$ is another Serre weight with~$(\rho')^{V(k)} \cong \sigma^{V(k)}$ then $Q_\sigma^+$ surjects onto~$R_{\rho'}$.
This implies that~$M_\rho$ also surjects onto~$R_{\rho'}$, because~$R_{\rho'}$ has a unique maximal submodule, and its cosocle is not~$\rho$, and so if~$M_\rho \to R_{\rho'}$ has image in~$\rad(R_{\rho'})$ then so does~$Q_\sigma^+ \to R_{\rho'}$.
Hence we can write
\[
Q_{\sigma}^+ = M_{\rho, \rho'} \oplus R_{\rho} \oplus R_{\rho'}
\]
for some~$M_{\rho, \rho'}$.
Repeating this argument, we find that
\[
Q_{\sigma}^+ = \bigoplus_{\rho: \rho^{V(k)} \cong \sigma^{V(k)}} R_\rho,
\]
since the right-hand side is a summand of the left-hand side and they have the same cosocle.
Hence~$Q_{\sigma}^+$ is projective.
\end{proof}

\subsection{Independence of~$\sigma$.}\label{independencesigma}
Now we prove that the scalar~$\lambda_\sigma$ is independent of~$\sigma$.
If~$\sigma_1^{U(k)} \cong \sigma_2^{U(k)}$, the existence of an injective $\cbF_p[G]$-linear map
\[
\cInd_K^G(\sigma_1) \to \cInd_K^G(\sigma_2)
\]
immediately implies that~$\lambda_{\sigma_1} = \lambda_{\sigma_2}$. 
To go further, we make use of a result about the blocks of~$\cbF_p[G(k)]$.

By the main theorem of~\cite{Humphreysdefect} the defect group of a block of~$\cbF_p[G(k)]$ is either the trivial subgroup of~$G(k)$ or the Sylow $p$-subgroup~$U(k)$ of~$G(k)$.
A block has trivial defect group if and only if it contains a projective simple module, which is then uniquely determined by the block, and the projective simple modules for~$\cbF_p[G(k)]$ are precisely the twists of the Steinberg representation.
By~\cite[Section~5(b)]{Humphreysdefect}, the number of blocks whose defect group is~$U$ is equal to the order of the centre of $G(k)$.
So the blocks of highest defect are in bijection with the central characters of~$G(k)$, and the bijection is specified by the action of the centre on any simple module in the block.
We deduce the following lemma.
\begin{lemma}\label{connectextensions}
If~$\sigma, \sigma'$ are irreducible, non-projective representations of~$\cbF_p[G(k)]$ with the same central character, then there exists a sequence
\[
\sigma_0 = \sigma, \sigma_1, \ldots, \sigma_n = \sigma'
\]
of simple~$\cbF_p[G(k)]$-modules such that~$\Ext^1_{G(k)}(\sigma_i, \sigma_{i+1}) \ne 0$ or~$\Ext^1(\sigma_{i+1}, \sigma_i) \ne 0$ for all~$i$.
\end{lemma}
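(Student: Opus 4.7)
The plan is to combine two ingredients: first, that $\sigma$ and $\sigma'$ must lie in the same block of $\cbF_p[G(k)]$, and second, that the Ext-quiver of any indecomposable block is connected.

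For the first ingredient I would appeal to the consequences of \cite{Humphreysdefect} summarised just above the lemma. A block of trivial defect contains a unique simple module, which is moreover projective (and hence a twist of the Steinberg representation); therefore every simple module in such a block is projective. Since $\sigma$ and $\sigma'$ are non-projective by hypothesis, each of them belongs to a block of full defect $U(k)$. The full-defect blocks of $\cbF_p[G(k)]$ are in bijection with the characters of the centre of $G(k)$, and the bijection is determined by the action of the centre on any simple module in the block. As $\sigma$ and $\sigma'$ share a central character, they lie in the same block $B$.

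For the second ingredient I would invoke the standard fact about finite-dimensional algebras: if $B$ is an indecomposable artinian algebra then the simple $B$-modules form a single equivalence class under the transitive closure of the relation $S \sim S'$ iff $\Ext^1_B(S, S') \ne 0$ or $\Ext^1_B(S', S) \ne 0$. The justification is that if $\mathcal{C}$ is one such equivalence class, then $\bigoplus_{S \in \mathcal{C}} P_S$ (sum of projective covers) is a two-sided ideal summand of $B$: one shows by induction along the radical filtration that the composition factors of each $P_S$ all lie in the class of $S$, using the identification of $\rad(P_S)/\rad^2(P_S)$ with the $\Ext^1$-weighted direct sum of simples. If there were more than one class, the algebra would split as a direct product, contradicting indecomposability of $B$.

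Applying this general principle to the block $B$ containing $\sigma$ and $\sigma'$ produces the desired chain of simple modules with nonzero $\Ext^1$'s between consecutive terms. The potentially delicate point is the verification that the full-defect blocks really are in bijection with central characters in the manner claimed, but this is exactly what is stated in~\cite[Section~5(b)]{Humphreysdefect} as already recalled; granting it, the rest of the argument is a direct application of the two bulleted facts above.
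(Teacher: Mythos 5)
Your proposal is correct and follows essentially the same approach as the paper: both arguments use the Humphreys results recalled just before the lemma to place $\sigma$ and $\sigma'$ in the same full-defect block, and then invoke the connectedness of the Ext-quiver of an indecomposable block. The only difference is that you spell out a standard proof of this last fact, which the paper leaves implicit.
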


Since every Steinberg twist has the same $U(k)$-invariants as some nonprojective representation (for instance, a character) it suffices by change of weight to prove that~$\lambda_{\sigma} = \lambda_{\sigma'}$ provided that there exists a nonsplit $\cbF_p[G(k)]$-extension
\[
0 \to \sigma \to X \to \sigma' \to 0.
\]
Assume~$\lambda_\sigma \ne \lambda_{\sigma'}$ and let~$T-\lambda_{\sigma} \in \mZ$ act on the short exact sequence
\[
0 \to \cInd_{K}^G(\sigma) \xrightarrow{\iota} \cInd_{K}^G(X) \xrightarrow{\pr} \cInd_{K}^G(\sigma') \to 0.
\]
We find that $T-\lambda_{\sigma}$ factors through a morphism
\[
\alpha: \cInd_{K}^G(\sigma') \to \cInd_{K}^G(X)
\]
whose composition with~$\pr$ is multiplication by the nonzero scalar~$\lambda_{\sigma'} - \lambda_\sigma$.
But then~$(\lambda_{\sigma'}-\lambda_\sigma)^{-1}\alpha$ is a section of~$\pr$, contradicting the following lemma.

\begin{lemma}\label{nonsplit}
Assume that
\begin{equation}\label{extension}
0 \to \sigma \to X \to \sigma' \to 0
\end{equation}
is a nonsplit $\cbF_p[G(k)]$-extension.
Then the extension
\begin{equation}\label{inductextension}
0 \to \cInd_{K}^G(\sigma) \xrightarrow{\iota} \cInd_{K}^G(X) \xrightarrow{\pr} \cInd_{K}^G(\sigma') \to 0
\end{equation}
is not split.
\end{lemma}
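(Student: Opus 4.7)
The plan is to contradict a hypothetical $G$-linear splitting of~\eqref{inductextension} by adjunction: via the Frobenius reciprocity isomorphism between $\cInd_K^G$ and restriction to~$K$, I would produce from such a splitting a $K$-linear section of the original sequence~\eqref{extension}. Since $\sigma, X, \sigma'$ are all inflated from $\cbF_p[G(k)]$-modules along the reduction map $\red: K \twoheadrightarrow G(k)$, any $K$-linear section automatically descends to a $G(k)$-linear section, which is forbidden by hypothesis.

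Concretely, suppose for contradiction that $s: \cInd_K^G(\sigma') \to \cInd_K^G(X)$ is a $G$-linear section of~$\pr$. For $v \in \sigma'$, write $[1, v] \in \cInd_K^G(\sigma')$ for the function supported on~$K$ with value~$v$ at~$1$. The map $v \mapsto [1, v]$ is $K$-linear (this is the unit of the adjunction $\cInd_K^G \dashv \Res_K^G$), and so is the evaluation-at-$1$ map $\mathrm{ev}_1: \cInd_K^G(X) \to X$. Then I would consider the composite
\[
\phi: \sigma' \xrightarrow{v \mapsto [1, v]} \cInd_K^G(\sigma') \xrightarrow{s} \cInd_K^G(X) \xrightarrow{\mathrm{ev}_1} X,
\]
which is $K$-linear. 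The relation $\cInd_K^G(\pr) \circ s = \id$, evaluated at~$[1, v]$ and then at the point $1 \in G$, gives $\pr(s([1, v])(1)) = [1, v](1) = v$, so $\pr \circ \phi = \id_{\sigma'}$, producing a $K$-linear splitting.

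Since the $K$-action on~$\sigma, X, \sigma'$ factors through $\red: K \twoheadrightarrow G(k)$, every $K$-linear map between these representations is $G(k)$-linear, so $\phi$ splits~\eqref{extension} as a sequence of $\cbF_p[G(k)]$-modules, contradicting the hypothesis. There is no substantive obstacle; the only points to verify carefully are the $K$-equivariance of $v \mapsto [1, v]$ and the commutativity of $\mathrm{ev}_1$ with $\cInd_K^G(\pr)$, both of which are formal.
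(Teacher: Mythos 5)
Your proof is correct and uses essentially the same idea as the paper: both arguments exploit the fact that the unit $u_\tau: \tau \to \cInd_K^G(\tau)$ of Frobenius reciprocity admits a $K$-linear retraction (the paper cites Mackey decomposition; you construct the retraction explicitly as $\mathrm{ev}_1$), then transport a hypothetical $G$-linear splitting of~\eqref{inductextension} through this to split~\eqref{extension}. The only cosmetic difference is that the paper works with a retraction of $\iota$ while you work with a section of $\pr$, and you spell out the (implicit in the paper) step that a $K$-linear splitting descends to a $G(k)$-linear one because $\red: K \twoheadrightarrow G(k)$ is surjective.
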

\begin{proof}
This is because, by the Mackey decomposition, the unit~$u$ of the Frobenius reciprocity adjunction is a split injection.
More precisely, assume that~(\ref{inductextension}) split and choose a $G$-linear retraction $r: \cInd_{K}^G(X) \to \cInd_K^G(\sigma)$ of~$\iota$.
There is a commutative diagram
\[
\begin{tikzcd}
\sigma \arrow{r}{u_\sigma} \arrow[d] & \cInd_K^G(\sigma) \arrow{d}{\iota}\\
X \arrow{r}{u_X} & \cInd_K^G(X).
\end{tikzcd}
\]
If~$r_\sigma$ is a $K$-linear retraction of~$u_\sigma$, then the composition $r_\sigma \circ r \circ u_X$ is a $K$-linear retraction of~$\sigma \to X$, contradicting the assumption that~(\ref{extension}) is not split.
\end{proof}

\subsection{End of proof.}\label{endofproof}
Since we know that the scalar~$\lambda_\sigma \in \cbF_p$ does not depend on~$\sigma$, we denote it by~$\lambda$.
We have an $\cbF_p$-linear ring homomorphism
\[
\alpha: \mZ \to \cbF_p, T \mapsto \lambda,
\]
and it suffices to prove that if~$T \in \ker(\alpha)$ and~$\pi$ is a smooth $\cbF_p[G]$-representation with central character~$\zeta$ then~$T$ is locally nilpotent on~$\pi$.
Indeed, it then follows that the geometric series for~$(1-T)^{-1}$ converges at each vector in~$\pi$ and defines an inverse for~$1-T$ in~$\mZ$, since it is locally a polynomial in~$T$. 
So $\ker(\alpha)$ is the only maximal ideal of~$\mZ$.

So let~$x \in \pi$.
Since~$\pi$ is smooth, the representation~$\tau = \langle K \cdot x \rangle$ generated by~$x$ under~$\cbF_p[K]$ is finite-dimensional, and there is a $G$-linear map
\[
\cInd_K^G(\tau) \to \pi
\]
whose image contains~$x$.
So it suffices to prove that~$T$ is nilpotent on~$\cInd_K^G(\tau)$.
But this follows from the assumption that~$T$ is zero on all Serre weights~$\sigma$, since~$\cInd_K^G(\tau)$ has a finite filtration whose subquotients are of the form~$\cInd_K^G(\sigma)$.

\bibliographystyle{amsalpha}
\bibliography{refpapers}

\end{document}